
 


\documentclass[12pt]{amsart}

\pagestyle{myheadings}

\addtolength{\oddsidemargin}{-2cm} 



\addtolength{\textwidth}{4cm}

\addtolength{\evensidemargin}{-2cm}





\vfuzz2pt 

\hfuzz2pt 




%

\newtheorem{theorem}{Theorem}[section]

\newtheorem{corollary}[theorem]{Corollary}

\theoremstyle{definition}

\newtheorem{definition}[theorem]{Definition}

\newtheorem{remark}[theorem]{Remark}

\theoremstyle{parrafo}


\begin{document}

\title[]{Boundedness of averaging operators on geometrically doubling metric spaces}

\author{J. M. Aldaz}
\address{Instituto de Ciencias Matem\'aticas (CSIC-UAM-UC3M-UCM) and Departamento de 
Matem\'aticas,
Universidad  Aut\'onoma de Madrid, Cantoblanco 28049, Madrid, Spain.}
\email{jesus.munarriz@uam.es}
\email{jesus.munarriz@icmat.es}

\thanks{2010 {\em Mathematical Subject Classification.} 41A35}

\thanks{The author was partially supported by Grant MTM2015-65792-P of the
MINECO of Spain, and also by by ICMAT Severo Ochoa project SEV-2015-0554 (MINECO)}







\begin{abstract} We prove that averaging operators are uniformly bounded on $L^p$ for all 
 geometrically doubling metric measure spaces and all $1 \le p < \infty$, with bounds independent
of the measure.   From this result, the $L^p$ convergence of averages as $r \to 0$
immediately follows.
\end{abstract}


\maketitle


\markboth{J. M. Aldaz}{Averaging operators}

\section {Introduction} 

It is a  well known  consequence of translation invariance that  for Lebesgue measure
on $\mathbb{R}^d$, the averages $A_r f$ (also known as Steklov means) 
converge to
$f$ in $L^1$ as $r\to 0$. Rather surprisingly, the corresponding approximation question 
regarding arbitrary locally finite
Borel measures $\mu$ on $\mathbb{R}^d$ (i.e.,  whether 
$\lim_{r\to 0}A_{r, \mu} f = f$ in $L^1(\mu)$)
does not seem to have been
studied in the literature.

 Here we give an affirmative answer not just for  $\mathbb{R}^d$, but  for every geometrically doubling
 metric measure space (every metric space of homogeneous type, in the terminology of \cite{CoWe}, 
cf. Definition  \ref{geomdoub} below).  We do this by proving that 
averaging operators are uniformly bounded on $L^1$,  something that was previously unknown even for  $\mathbb{R}^d$.  
By interpolation, boundedness also holds for all $p\in (1,\infty)$, since the case
$p = \infty$ is trivial. Note that on $\mathbb{R}^d$ and for
$p\in (1,\infty)$, the
$L^p$ boundedness of averaging operators was already known, since
it is immediate from the correspondig boundedness of the centered
Hardy-Littlewood maximal operator. But for  geometrically doubling
metric spaces in which the Besicovitch covering theorem does not hold,
not only the case $p=1$, but also the case $p\in (1,\infty)$, is new.

The boundedness question 
regarding euclidean spaces was asked
by the author in \cite{Al1}; 
 two special cases were proved in that paper: the exponential distribution in
 one dimension, and the standard gaussian distribution  in every
 dimension. Here we present the general result.

\section {Definitions and notation} 

We will use $B^{o}(x,r) := \{y\in X: d(x,y) < r\}$ to denote open balls, 
and 
$B^{cl}(x,r) := \{y\in X: d(x,y) \le r\}$ to refer to metrically closed balls (``closed ball" will always be understood in the metric, not the
topological sense). 
 If we do not want to specify whether balls are open or closed,
we write $B(x,r)$. But when we utilize $B(x,r)$, we assume that all balls are of the same kind, i.e., all open or all closed.

\begin{definition} A Borel measure is   {\em $\tau$-additive} or {\em $\tau$-smooth}, if for every
collection  $\{U_\alpha : \alpha \in \Lambda\}$
 of  open sets, 
$$
\mu (\cup_\alpha U_\alpha) = \sup_{\mathcal{F}} \mu(\cup_{i=1}^nU_{\alpha_i}),
$$
 where the supremum is taken over all finite subcollections $\mathcal{F} = \{U_{\alpha_1}, \dots, U_{\alpha_n} \}$
of  $\{U_\alpha : \alpha \in \Lambda\}$.
 We say that $(X, d, \mu)$ is a {\em metric measure space} if
$\mu$ is a  $\tau$-additive  Borel measure on the metric space $(X, d)$, such that $\mu$ assigns finite measure
to bounded Borel sets. 
\end{definition} 

From now on we always assume that measures are locally finite (finite on bounded sets)
and not
identically 0.
For motivation regarding the definition of metric measure spaces using $\tau$-additivity,
cf. \cite{Al3}. Note that
in separable metric spaces all Borel measures are  $\tau$-additive (so the spaces considered above are more general than 
those given by some commonly used alternative definitions, cf. \cite{HKST} for instance)
 and the same happens
with  all Radon measures in arbitrary metric spaces.

Recall that 
the complement of
the support $(\operatorname{supp}\mu)^c := \cup \{ B^{o}(x, r): x \in X, \mu B^{o}(x,r) = 0\}$
of a Borel  measure
 $\mu$,  is an open set, and hence measurable. 

\begin{definition}\label{maxfun} Let $(X, d)$ be a metric space and let
$\mu$ be a locally finite Borel measure on $X$. 
If $\mu (X \setminus \operatorname{supp}\mu) = 0$, 
we say that $\mu$ has {\em full support}. 
\end{definition}

By $\tau$-additivity,  if $(X, d, \mu)$ is  a metric measure space, then 
$\mu$ has full support,
since $X \setminus \operatorname{supp}\mu $ is a union of open balls of measure zero.
Actually, the other implication also holds, for the support is always separable,  so  having full support is equivalent to
$\tau$-additivity (cf. \cite[Proposition 7. 2. 10]{Bo} for more details).

\begin{definition}\label{maxfun} Let $(X, d, \mu)$ be a metric measure space and let $g$ be  a locally integrable function 
on $X$. For each fixed $r > 0$ and each $x\in \operatorname{supp}\mu$, the
averaging operator $A_{r, \mu}$ is defined as
\begin{equation}\label{avop}
A_{r , \mu} g(x) := \frac{1}{\mu
(B(x, r))} \int _{B(x, r)}  g \ d\mu.
\end{equation}
\end{definition}

 Averaging operators in metric measure spaces are defined almost everywhere,   by
$\tau$-additivity. Sometimes it is convenient to  specify whether balls are open or closed; in that case,
 we use $A_{r , \mu}^{o} $ and $A_{r , \mu}^{cl} $ for the corresponding operators. 
 Furthermore, when we are considering only one measure $\mu$ we often omit it, 
writing $A_{r } $ instead of the longer $A_{r , \mu} $. 

\vskip .3cm

\begin{definition} Let $(X, d)$ be a metric space. A {\em strict $r$-net} (resp. {\em non-strict $r$-net}) 
 in $X$ is a subset $S \subset X$ such that for any pair of distinct points $x,y \in S$, we have $d(x,y) > r$  (resp. $d(x,y)  \ge r$). 
\end{definition} 

We  speak of an {\em $r$-net}   if we do not want to specify whether it is strict or not.
To ensure disjointness of the balls $B(x,r/2)$,  $r$-nets are always  taken to be strict when working with closed balls;
otherwise, we assume $r$-nets are
non-strict.

\begin{definition} \label{geomdoub} A metric space is {\it geometrically doubling}  if there exists a positive
integer $D$ such that every ball of radius $r$ can be covered with no more than $D$ balls
of radius $r/2$.  We call the smallest such $D$ the {\em doubling constant} of the space.
\end{definition}

We use $D^{o}$ and $D^{cl}$  to refer to the corresponding constants for open and for closed balls.
It is easy to see, by enlarging balls slightly, that the geometrically doubling condition is satisfied for open balls if and only if
it is satisfied for closed balls. But the constants will in general be different; for instance, if $X= \mathbb{R}$, 
then $D^{o} = 3$ and $D^{cl} = 2$. 

\begin{remark}
Let $X$ be geometrically doubling with constant $D$, and let $M$ be the maximum size of an $r$-net in $B(x, r)$,  taken over 
all $x\in X$ and all  $r > 0$. Then $M \le D$, since every point in a maximal $r$-net
inside $B(x,r)$ is contained in one of the covering balls of radius $r/2$, and each
such ball can contain at most one point from the $r$-net. By analogy with previous notation, we use  $M^{cl}$   and $M^{o}$ for strict and non-strict nets
respectively.\end{remark}

\section{Boundedness of averaging operators on geometrically doubling spaces} 

The following proof reminds the reader of the fact that bounded continuous functions with
bounded support are dense in $L^p$ for $1 \le p < \infty$, and for these functions, averages converge in norm as
$r\to 0$.

\begin{theorem} \label{L1}{\bf $L^p$-Lebesgue differentiation}.   \label{avimplyL1} Let $(X, d, \mu)$ be a
metric measure space, and let $1 \le p < \infty$.
If there is a constant $C > 0$ such that 
$\sup_{r > 0} \|A_r \|_{L^p(\mu)\to L^p(\mu)} \le C,
$ 
then  for every $f\in L^p(\mu)$, $\lim_{r\to 0} A_{r} f =  f$ in $L^p$. 
 \end{theorem}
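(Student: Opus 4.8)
The plan is to use the standard density argument. First I would establish the convergence $\lim_{r\to 0} A_r g = g$ in $L^p(\mu)$ for a dense class of ``nice'' functions $g$, and then I would propagate this to all of $L^p(\mu)$ by using the uniform bound $\sup_{r>0}\|A_r\|_{L^p(\mu)\to L^p(\mu)}\le C$ together with a standard $3\varepsilon$-estimate.

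\textbf{Step 1: The dense class.} I would take $g$ to be a bounded, uniformly continuous function with bounded support (the excerpt already hints that bounded continuous functions with bounded support are dense in $L^p(\mu)$ for $1\le p<\infty$, using local finiteness of $\mu$; one can arrange uniform continuity, e.g. by truncating and using the metric structure, or simply work with Lipschitz functions with bounded support, which also form a dense class). For such $g$, given $\varepsilon>0$, uniform continuity gives $\delta>0$ so that $d(x,y)<\delta$ implies $|g(x)-g(y)|<\varepsilon$. Then for every $r<\delta$ and every $x\in\operatorname{supp}\mu$,
\[
|A_r g(x) - g(x)| = \left|\frac{1}{\mu(B(x,r))}\int_{B(x,r)} (g(y)-g(x))\,d\mu(y)\right| \le \varepsilon,
\]
since $d(x,y)<r<\delta$ on $B(x,r)$. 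Moreover $A_r g$ is supported within the $r$-neighborhood of $\operatorname{supp} g$, so for $r\le 1$ say, all the functions $A_r g - g$ are supported in a fixed bounded set $K$ of finite measure. Hence $\|A_r g - g\|_{L^p(\mu)}^p \le \varepsilon^p \mu(K)$ for all $r<\min\{\delta,1\}$, which shows $A_r g\to g$ in $L^p(\mu)$.

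\textbf{Step 2: Density argument.} Now let $f\in L^p(\mu)$ and $\varepsilon>0$ be arbitrary. Choose $g$ in the dense class with $\|f-g\|_{L^p(\mu)}<\varepsilon$. Then
\[
\|A_r f - f\|_{L^p(\mu)} \le \|A_r(f-g)\|_{L^p(\mu)} + \|A_r g - g\|_{L^p(\mu)} + \|g - f\|_{L^p(\mu)} \le (C+1)\varepsilon + \|A_r g - g\|_{L^p(\mu)}.
\]
By Step 1 the last term is $<\varepsilon$ for $r$ small enough, so $\limsup_{r\to 0}\|A_r f - f\|_{L^p(\mu)}\le (C+2)\varepsilon$. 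Letting $\varepsilon\to 0$ gives the claim.

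\textbf{Expected main obstacle.} The only genuinely delicate point is the density of a class of functions for which Step 1 works: one needs functions that are both uniformly (or Lipschitz) continuous and supported in a bounded set, and one must check these are dense in $L^p(\mu)$ — this uses that $\mu$ is locally finite and $\tau$-additive (so that, e.g., simple functions built from balls, or characteristic functions of bounded Borel sets, can be approximated in $L^p$, and these in turn approximated by Lipschitz functions via Urysohn-type constructions in metric spaces). Everything else is the routine $3\varepsilon$-argument. Note that geometric doubling plays no role in this particular theorem; it is only needed to \emph{verify} the hypothesis $\sup_r\|A_r\|_{L^p\to L^p}\le C$, which is the content of the other main result of the paper.
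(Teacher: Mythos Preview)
Your proposal is correct and follows essentially the same density-plus-$3\varepsilon$ strategy as the paper. The only minor technical difference is in Step~1: you invoke uniform continuity (or Lipschitz regularity) of $g$ to get a direct uniform bound $|A_r g - g|\le \varepsilon$, whereas the paper uses merely continuous bounded $g$ with bounded support, obtains pointwise convergence $A_{r_n}g(x)\to g(x)$, and then appeals to dominated convergence with dominant $(2t)^p\,\mathbf{1}_{B(z,R+2)}$; this spares the paper from having to argue that \emph{uniformly} continuous (or Lipschitz) compactly supported functions are dense, since in a non-doubling metric space a continuous function with bounded support need not be uniformly continuous.
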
 
 
 \begin{proof}  Given any sequence $\{r_n\}_{n\ge 1}$ satisfying $r_n\to 0$,
 we may suppose that $r_n \le 1$, by disregarding a finite number of terms, if needed.
We may also suppose that $C\ge1$ (else, replace it by 1).

Recall that in metric measure
spaces the  continuous functions that belong to $L^p$ are dense in $L^p$, by the standard argument
whereby the case of  real valued functions   is reduced to the case of non-negative functions,
which by successive approximations is reduced first, to the case of simple functions, then
to indicator functions of measurable sets, and finally, to indicator functions of closed sets $F$
(with finite measure); for these functions the result is true by the Tietze-Urysohn extension theorem:
given $\varepsilon > 0$ we choose $O$ open such that $F\subset O$ and $\mu (O\setminus F)
 < \varepsilon$; then we extend $g : O^c \cup F \to \{0,1\}$ given by $g = 1$ on $F$, $g = 0$ on $O^c$,
 to a continuous function  $G : X \to [0,1]$.

 Let 
 $z\in X$, let $\varepsilon > 0$, and let $f\in L^p(\mu)$. Then there exist 
 $t \gg 0$ and $ R \gg 0$ such that
 $$
 \|f - f  \ \mathbf{1}_{B(z, R) \cap \{|f| \le t\}} \|_p <  
  \frac{\varepsilon }{6 C}.
 $$ 
Next we choose a continuous function $g$ such that $- t \le g \le t$, 
$\operatorname{supp} g \subset B(z, R+ 1)$ and 
 $$
  \| f  \ \mathbf{1}_{B(z, R) \cap \{|f| \le t\}} - g \|_p 
 < \frac{\varepsilon }{6 C}.
 $$ 
 By the triangle inequality,
 $$
  \| f  - g \|_p 
 < \frac{\varepsilon }{3 C}.
 $$
 Now $g$ is continuous, so for all $x\in X$, 
 $\lim_n A_{r_n} g(x)   =  g(x)$. Since for all $n\ge 1$, 
 $$
 |A_{r_n} g(x)   -  g(x)|^p
 \le (2 t)^p  \ \mathbf{1}_{B(z, R + 2)}  (x) \in L^1(\mu),
 $$
  by the dominated convergence
 theorem, 
 $\lim_n \|A_{r_n} g  - g \|_p = 0$. Hence, 
  $$
 \limsup_n \|A_{r_n} f  - f\|_p 
 \le 
 \sup_n \|A_{r_n} f  - A_{r_n} g \|_p + \lim_n \|A_{r_n} g  - g \|_p +  \|g  - f \|_p 
 < \frac{\varepsilon}{3} +  \frac{\varepsilon}{3 C} 
 < \varepsilon.
 $$
\end{proof}

\begin{definition} \label{conjugate} We call 
\begin{equation}\label{conjug}
a_s (y)  
: =
 \int_X  \frac{\mathbf{1}_{B(y,s)}(x)}{\mu B(x,s)}  \  d\mu(x) 
 \end{equation}
the {\em conjugate function} to the averaging operator $A_s$.
\end{definition}

As  it happens with averaging operators, the conjugate function $a_s$ is well defined a.e., when $y$ belongs to
 the support of $\mu$. If one wishes,  it can be defined
everywhere via the usual conventions $0/0 = 0 \cdot \infty = 0$, and $1/0 = \infty$. These handle all the cases
where the denominator vanishes.

\begin{theorem}  \label{equiv} Let $(X, d, \mu)$ be a metric measure space.
 The averaging operator $A_s$ is bounded on
$L^1(\mu)$ if and only if $a_s \in L^\infty (\mu)$, in which case 
$\|A_s \|_{L^1(\mu)\to L^1(\mu)} = \|a_s\|_\infty. 
$
 \end{theorem}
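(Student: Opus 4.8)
The plan is to recognize this as a duality computation: $A_s$ is an integral operator with kernel $K(x,y) = \mathbf{1}_{B(y,s)}(x)/\mu B(x,s)$ (so that $A_s g(x) = \int K(x,y) g(y)\,d\mu(y)$), and the $L^1 \to L^1$ norm of such an operator is governed by the supremum over $y$ of the ``column mass'' $\int K(x,y)\,d\mu(x)$, which is precisely $a_s(y)$. So I would structure the proof as two inequalities.

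For the easy direction, assume $a_s \in L^\infty(\mu)$ with $\|a_s\|_\infty = N$. Given $g \in L^1(\mu)$, I would write
\begin{equation*}
\int_X |A_s g(x)|\,d\mu(x) \le \int_X \frac{1}{\mu B(x,s)}\int_X \mathbf{1}_{B(x,s)}(y)|g(y)|\,d\mu(y)\,d\mu(x),
\end{equation*}
then apply Tonelli's theorem to interchange the order of integration (everything is nonnegative and $\sigma$-finiteness holds on the support, where these functions are defined), using the symmetry $\mathbf{1}_{B(x,s)}(y) = \mathbf{1}_{B(y,s)}(x)$, to get $\int_X |g(y)| a_s(y)\,d\mu(y) \le N\|g\|_1$. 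Hence $\|A_s\| \le \|a_s\|_\infty$.

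For the reverse direction, I would show that $\|A_s\| \ge \|a_s\|_\infty$, which also forces $a_s \in L^\infty$ whenever $A_s$ is bounded. The natural approach is to test against indicators: given any measurable set $E$ with $0 < \mu E < \infty$, apply boundedness to $g = \mathbf{1}_E$ and again use Tonelli to compute $\int_X A_s\mathbf{1}_E(x)\,d\mu(x)$ — but I only get $\int_E a_s\,d\mu \le \|A_s\|\,\mu E$ if I can drop the absolute value, which is fine since $A_s \mathbf{1}_E \ge 0$. This says the average of $a_s$ over any finite-measure set $E$ is at most $\|A_s\|$. By the Lebesgue differentiation philosophy — or more simply, by choosing $E = \{a_s > \|A_s\| + \delta\}$ (intersected with a large ball to ensure finite, and positive if nonempty, measure) — I conclude $a_s \le \|A_s\|$ a.e., so $\|a_s\|_\infty \le \|A_s\|$. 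Combining the two inequalities gives equality.

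The main obstacle I anticipate is the measurability/integrability bookkeeping needed to justify Tonelli's theorem: one must check that $(x,y) \mapsto \mathbf{1}_{B(x,s)}(y)/\mu B(x,s)$ is jointly measurable on $\operatorname{supp}\mu \times \operatorname{supp}\mu$, and that $\sigma$-finiteness is available (the support is separable, hence $\sigma$-compact need not hold, but one can exhaust by countably many balls of finite measure). The joint measurability of the kernel hinges on $x \mapsto \mu B(x,s)$ being measurable, which follows from $\tau$-additivity and lower/upper semicontinuity of $x\mapsto \mu B^o(x,s)$ and $x \mapsto \mu B^{cl}(x,s)$; this is presumably standard in this setting and I would invoke it with a brief justification. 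A secondary subtlety is handling the set where $\mu B(x,s) = 0$, but on $\operatorname{supp}\mu$ this has measure zero (indeed is empty for balls containing $x$), so it does not affect any of the integrals.
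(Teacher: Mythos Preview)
Your proof is correct and follows essentially the same route as the paper: both directions rest on the Fubini--Tonelli identity $\|A_s f\|_{L^1} = \int f(y)\, a_s(y)\, d\mu(y)$ for $f \ge 0$, yielding $\|A_s\| \le \|a_s\|_\infty$ immediately, and the reverse inequality by testing on $f = \mathbf{1}_E$ with $E \subset \{a_s > t\}$ of positive finite measure. The paper presents the reverse direction as a direct contradiction rather than phrasing it via averages, but this is cosmetic; your extra attention to joint measurability and $\sigma$-finiteness is not addressed in the paper's proof, which simply invokes Fubini--Tonelli.
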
 

\begin{proof}  Let $0 \le f \in L^1(\mu)$, and suppose  $a_s \in L^\infty (\mu)$. Since by
Fubini-Tonelli
\begin{equation}\label{fubini}
\|A_s f\|_{L^1} 
=
\int_X A_s f(x)  \  d\mu(x)
=
\int_X\int_X  \frac{\mathbf{1}_{B(x,s)}(y) }{\mu B(x,s)} f (y) \  d\mu(y) \  d\mu(x)
\end{equation}
\begin{equation}\label{fubini2}
= 
\int_X  f (y) \int_X  \frac{\mathbf{1}_{B(y,s)}(x)}{\mu B(x,s)}  \  d\mu(x) \  d\mu(y)
= 
\int_X  f (y) \ a_s(y) \  d\mu(y),
\end{equation}
it follows from H\"older's inequality that $\|A_s \|_{L^1(\mu)\to L^1(\mu)} \le \|a_s\|_\infty. 
$

On the other hand, we claim that if $\|A_s \|_{L^1(\mu)\to L^1(\mu)} \le C$, then $\|a_s\|_\infty
\le C. 
$
Towards a contradiction, suppose $C < \|a_s\|_\infty$ (including the case $\|a_s\|_\infty
= \infty$). Then there is a $t > C$ and a measurable set $A_t$ such that 
$A_t \subset \{ a_s > t\}$ and $0 < \mu A_t  < \infty$. Let $f:= \mathbf{1}_{A_t} \in L^1(\mu)$.
Then 
\begin{equation}\label{fubini3}
\|A_s f\|_{L^1} 
=
\int_X  f (y) \  a_s(y) \  d\mu(y)
>  \int_{A_t}  t  \  d\mu(y)
= t \mu (A_t) > C \|f\|_{L^1}.
\end{equation}
\end{proof}

\begin{definition} \label{loccomp}  We say that a measure $\mu$ satisfies a {\it local comparability condition} 
if there
exists a constant $C\in[1, \infty)$ such that for  all pairs of points $x,y\in X$
and every $r > 0$,
whenever $d(x,y) < r$, we have 
$\mu(B(x,r))\le C \mu(B(y,r)).$
\end{definition}

The preceding definition comes from \cite[p.  737]{NaTa}. There,  local comparability  is called a ``mild uniformity assumption";  the term ``local comparability"
was introduced in \cite{Al1}, and used also in \cite{Al2}.
As indicated in \cite[p. 737]{NaTa}, if $\mu$ satisfies a  $C$ local comparability condition, then
$
a_s (y)  \le C,
$
so $\|A_s\|_{L^1\to L^1} \le C$.

It is natural to ask under which conditions one can have  uniform boundedness of $A_r$  without
 local comparability.  In  \cite[Example 4.1]{Al1}   a metric measure space
 is exhibited
where the measure lacks local comparability,
and $\|A_s\|_{L^p\to L^p}$ is unbounded for all $p\in [1, \infty)$.
Also, it is shown in  \cite[Theorem  4.8]{Al1}   that on $\mathbb{R}$,  there is a measure $\mu$ such that  
 the right directional averaging operator
$$
A_{s,\mu}^r f(x) := \frac{1}{\mu ([x, x + s])} 
\int_{[x, x + s]}  f (y)   \  d \mu (y)
$$
is unbounded on $L^1(\mu)$ for $s = 1$.

On the other hand, the
uniform $L^1$  boundedness of the operators $A_r$ was shown to hold for $\mathbb{R}^d$ in two special cases, 
despite the lack of local comparability: the
exponential density in dimension one, and the standard gaussian measures in every dimension (cf.  \cite[Theorems  4.2 and 4.3]{Al1}).

Here we obtain the general result: Averaging operators are 
 $L^1$  bounded, uniformly on $r$,  for every 
locally finite measure in any geometrically doubling metric measure
space, 
not just in $\mathbb{R}^d$.

\begin{theorem}  \label{geomdoubling} Let $(X, d, \mu)$ be a geometrically doubling 
metric measure space, with doubling constant $D$, 
and let
$M$ be the maximum 
cardinality of any $r$-net in $B(x, r)$, where the maximum is taken
over all  $x \in X$ and all $r >0$. 
Then for every $s > 0$, 
$\|A_s \|_{L^1(\mu)\to L^1(\mu)} \le M \le D.
$ 
Since $\|A_s \|_{L^\infty(\mu)\to L^\infty(\mu)} = 1, 
$ for $1 < p < \infty$ we have $\|A_s \|_{L^p(\mu)\to L^p(\mu)}  \le M^{1/p}
$
by interpolation. 
 \end{theorem}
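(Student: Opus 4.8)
The plan is to reduce everything to the single estimate $\|A_s\|_{L^1(\mu)\to L^1(\mu)}\le M$ and then invoke what is already available. By Theorem~\ref{equiv} this estimate is equivalent to $\|a_s\|_\infty\le M$, i.e.\ to $a_s(y)\le M$ for every $y\in\operatorname{supp}\mu$ (hence for $\mu$-almost every $y$, since $\mu$ has full support). As $A_s$ is an averaging operator we trivially have $\|A_s\|_{L^\infty(\mu)\to L^\infty(\mu)}=1$, so once the $L^1$ bound is in hand the Riesz--Thorin interpolation theorem yields $\|A_s\|_{L^p(\mu)\to L^p(\mu)}\le M^{1/p}$ for $1<p<\infty$, while $M\le D$ is the Remark after Definition~\ref{geomdoub}. (The $L^p$-convergence of averages mentioned in the abstract is then immediate from Theorem~\ref{L1}.) So I fix $s>0$ and $y\in\operatorname{supp}\mu$ and recall that $a_s(y)=\int_{B(y,s)}\frac{d\mu(x)}{\mu B(x,s)}$.

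The core of the argument is a greedy decomposition of $B(y,s)$. Fix a small parameter $\varepsilon>0$ and set $R_0:=B(y,s)\cap\operatorname{supp}\mu$. Given a nonempty $R_{i-1}$, choose $x_i\in R_{i-1}$ with $\mu B(x_i,s)\le(1+\varepsilon 2^{-i})\inf_{x\in R_{i-1}}\mu B(x,s)$ — a centre where the ball of radius $s$ has (nearly) the smallest measure among those still in play — and put $E_i:=R_{i-1}\cap B(x_i,s)$ and $R_i:=R_{i-1}\setminus E_i$. Two observations make this work. First, $R_i\cap B(x_i,s)=\emptyset$, so the later centres $x_{i+1},x_{i+2},\dots$, all of which lie in $R_i$, lie outside $B(x_i,s)$; thus $d(x_i,x_j)\ge s$ for all $j>i$ (or $>s$ when balls are closed), so $\{x_1,x_2,\dots\}$ is an $s$-net contained in $B(y,s)$ and therefore has at most $M$ elements. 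Hence the process terminates after some $l\le M$ steps with $R_l$ empty, and $\{E_i\}_{i=1}^l$ partitions $B(y,s)$ up to a $\mu$-null set. Second, for $x\in E_i$ one has $\mu(E_i)\le\mu B(x_i,s)\le(1+\varepsilon 2^{-i})\inf_{R_{i-1}}\mu B(\cdot,s)\le(1+\varepsilon 2^{-i})\,\mu B(x,s)$, since $E_i\subseteq B(x_i,s)$ and $E_i\subseteq R_{i-1}$.

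Combining the two observations, $\int_{E_i}\frac{d\mu(x)}{\mu B(x,s)}\le 1+\varepsilon 2^{-i}$ (trivially if $\mu(E_i)=0$), and summing over the at most $M$ pieces,
\[
a_s(y)=\sum_{i=1}^{l}\int_{E_i}\frac{d\mu(x)}{\mu B(x,s)}\le\sum_{i=1}^{l}\bigl(1+\varepsilon 2^{-i}\bigr)\le l+\varepsilon\le M+\varepsilon .
\]
Since $\varepsilon>0$ is arbitrary, $a_s(y)\le M$, which is what was needed.

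The step I expect to require the most care is making the greedy choice legitimate: one needs $0<\inf_{x\in R_{i-1}}\mu B(x,s)<\infty$ for the defining inequality of $x_i$ to be satisfiable, and since $R_{i-1}\subseteq B(y,s)\cap\operatorname{supp}\mu$ it suffices to show $\delta_0:=\inf\{\mu B(x,s):x\in B(y,s)\cap\operatorname{supp}\mu\}>0$ (finiteness being clear from local finiteness of $\mu$). Geometric doubling enters here in a second, softer way: balls are totally bounded, so a minimizing sequence $(x_n)\subset B(y,s)\cap\operatorname{supp}\mu$ has a Cauchy subsequence; fixing one of its members $x_m$ with $d(x_n,x_m)<s/2$ for all large $n$ gives $B^{o}(x_m,s/2)\subseteq B^{o}(x_n,s)$ and hence $\mu B(x_n,s)\ge\mu B^{o}(x_m,s/2)>0$ because $x_m\in\operatorname{supp}\mu$, contradicting $\mu B(x_n,s)\to 0$ unless $\delta_0>0$. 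Apart from this, the only remaining bookkeeping is to run the whole argument consistently with open balls or with closed balls (using non-strict $s$-nets in the former case and strict ones in the latter, exactly as in the conventions fixed in Section~2), which changes the relevant value of $M$ but not the structure of the proof.
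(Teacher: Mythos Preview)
Your argument is correct and essentially identical to the paper's: both select, greedily and up to a $(1+\varepsilon)$ factor, centres $x_i\in B(y,s)$ minimizing $\mu B(\cdot,s)$ on the remaining set, observe that these centres form an $s$-net (hence at most $M$ of them), and bound each piece's contribution to $a_s(y)$ by roughly $1$; the paper also first verifies $\inf_{x\in B(y,s)}\mu B(x,s)>0$ via the doubling cover, which is the same idea as your total-boundedness/Cauchy-subsequence step. The only cosmetic differences are your use of $\varepsilon 2^{-i}$ in place of a fixed $\varepsilon$ and your explicit partition $\{E_i\}$ in place of the paper's ``first index $i$ with $x\in B(u_i,s)$'' bookkeeping.
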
 

\begin{proof} 
Let $X$ be geometrically doubling with constant $D$, and let $M$ be the maximum size of an $r$-net in $B(x, r)$. As was noted before, $M \le D$.
By disregarding a set of measure zero if needed, we suppose that 
$X = \operatorname{supp} \mu$, so every ball has positive measure.
Fix $y\in X$. We want to show that $a_s(y) \le M$, and then the result follows  from
Theorem \ref{equiv}. 

First we claim that $b_1 := \inf \{\mu B(x,s) : x \in B(y,s)\} > 0$. To see why, select
a sequence $\{x_n\}_{n\ge 1}$ of points in $B(y,s)$ so that
$\lim_n \mu B(x_n, s) = b_1$. Since $X$ is geometrically doubling, $B(y,s)$  
can be covered by at most $D$ balls of radius $s/2$, so at least one of these
balls, say, $B(w, s/2),$ contains an infinite  subsequence from $\{x_n\}_{n\ge 1}$, 
which after relabelling, we also denote by $\{x_n\}_{n\ge 1}$. 
Then $b_1 \ge \mu B(w, s/2) > 0$, since for all $n\ge 1$, $B(w, s/2) \subset B(x_n, s)$.
Now take $0 < \varepsilon \ll 1$, and choose $u_1\in B(y,s)$ so that
$\mu B(u_1, s) < (1 + \varepsilon) b_1$; let 
$b_2 := \inf \{\mu B(x,s) : x \in B(y,s) \setminus B(u_1, s) \}$, and select
$u_2\in B(y,s) \setminus B(u_1, s) $ so that $\mu B(u_2, s) < (1 + \varepsilon) b_2$;
repeat, with 
$b_{k + 1} := \inf \{\mu B(x,s) : x \in B(y,s) \setminus \cup_1^k B(u_i, s)  \}$,
$u_{k + 1}\in B(y,s) \setminus \cup_1^k B(u_i, s) $, and 
$\mu B(u_{k + 1}, s) < (1 + \varepsilon) b_{k + 1}$. Since the points $u_i$ form an $s$-net
in $B(y,s)$,
there is an $m\le M$ such that
$B(y,s) \setminus \cup_1^m B(u_i, s) = \emptyset$, and then the process stops.

Next, fix $x\in B(y,s)$, and let $i$ be the first index such that $x\in B(u_i,s)$. Then
$$
 \frac{\mathbf{1}_{B(y,s)}(x)}{\mu B(x,s)} 
 \le 
 (1 + \varepsilon) \frac{\mathbf{1}_{B(y,s) \cap B(u_i ,s)}(x)}{\mu B(u_i ,s)} 
 \le 
 (1 + \varepsilon)  \sum_{j=1}^m\frac{\mathbf{1}_{B(y,s) \cap B(u_j ,s)}(x)}{\mu B(u_j ,s)},
$$
so
\begin{equation*}
a_s (y)  
=
 \int_X  \frac{\mathbf{1}_{B(y,s)}(x)}{\mu B(x,s)}  \  d\mu(x) 
 \le
  \int_X  (1 + \varepsilon)  \sum_{j=1}^m\frac{\mathbf{1}_{B(y,s) \cap B(u_j ,s)}(x)}{\mu B(u_j ,s)}  \  d\mu(x) 
 \end{equation*}
\begin{equation*}
\le 
 (1 + \varepsilon)   \int_X \sum_{j=1}^m\frac{\mathbf{1}_{B(u_j ,s)}(x)}{\mu B(u_j ,s)}  \  d\mu(x) 
\le
  (1 + \varepsilon)  M,
 \end{equation*}
and 
$\|A_s \|_{L^1(\mu)\to L^1(\mu)} \le M 
$ follows by letting $\varepsilon \downarrow 0$.

It is obvious that for all $s > 0$, $\|A_s \|_{L^\infty(\mu)\to L^\infty(\mu)} = 1$
(we always have $\|A_s \|_{L^\infty(\mu)\to L^\infty(\mu)} \le  1$, since averages never exceed a 
supremum; for the other inequality,  just take $f = \mathbf{1}_X$). Thus, by a standard interpolation argument,
or simply by Jensen's inequality (cf. \cite[Theorem 2.10]{Al1})
for all  $1 < p < \infty$ we have $\|A_s \|_{L^p(\mu)\to L^p(\mu)}  \le M^{1/p}$.
\end{proof}

While the bounds given in the preceding theorem do not seem very tight, they are.
Adapting some arguments  from \cite{Al4}, we show next that it is possible to have 
$\|A^{cl}_{1,  \mu} \|_{L^1(\mu)\to L^1(\mu)} =  M^{cl} = D^{cl},
$ 
so the bounds from  Theorem  \ref{geomdoubling}  cannot in general
be improved. 
The exponential 
dependency on the dimension of the space was already known in some natural cases: 
 for the 
standard gaussian measure  $\gamma^d$  on  $(\mathbb{R}^d, \|\cdot\|_2)$, for $1 \le p < \infty$, and
for  every $d$ sufficiently large, the weak type $(p,p)$
constants satisfy $\left\|A_{\frac{\sqrt{3d - 3}}{2}} \right\|_{L^p\to L^{p, \infty}} >1.019^{d/p}$, cf. \cite[Theorem 4.3]{Al1}. 

Recall that balls with respect to $\|\cdot\|_\infty$,  the  $\ell_\infty$ norm on 
 $\mathbb{R}^d$, are cubes with sides parallel to the axes.

\begin{theorem} \label{kiss}  There exists a discrete measure $\mu$ on 
 $(\mathbb{R}^d, \|\cdot\|_\infty)$
  such that $ \|A^{cl}_{1,  \mu} \|_{L^1(\mu)\to L^1(\mu)} =  2^d =M^{cl} = D^{cl}$.
\end{theorem}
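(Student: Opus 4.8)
The plan is to produce an explicit purely atomic measure and compute the conjugate function $a_1$ of Definition \ref{conjugate} by hand, using Theorem \ref{equiv} to identify $\|A^{cl}_{1,\mu}\|_{L^1(\mu)\to L^1(\mu)}$ with $\|a_1\|_\infty$ (the essential supremum of $a_1$). Two facts are immediate. First, for the metric space $(\mathbb{R}^d,\|\cdot\|_\infty)$ one has $M^{cl}=D^{cl}=2^d$: the cube $B^{cl}(0,1)=[-1,1]^d$ is covered by the $2^d$ cubes $B^{cl}(v/2,1/2)$, $v\in\{-1,1\}^d$, so $D^{cl}\le 2^d$, while the $2^d$ vertices $\{-1,1\}^d\subset B^{cl}(0,1)$ are pairwise at distance $2>1$ and hence form a strict $1$-net, so $M^{cl}\ge 2^d$; since $M^{cl}\le D^{cl}$, both equal $2^d$. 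Second, $\mathbb{R}^d$ is geometrically doubling, so Theorems \ref{equiv} and \ref{geomdoubling} already give $\|A^{cl}_{1,\mu}\|_{L^1(\mu)\to L^1(\mu)}=\|a_1\|_\infty\le M^{cl}=2^d$ for \emph{every} locally finite $\mu$; only the reverse inequality has to be produced by a construction.

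Fix a strictly positive sequence $\varepsilon_n\downarrow 0$, put $c_n:=(10n,0,\dots,0)\in\mathbb{R}^d$, and define
\[
\mu:=\sum_{n\ge 1}\Bigl(\varepsilon_n\,\delta_{c_n}+\sum_{v\in\{-1,1\}^d}\delta_{c_n+v}\Bigr).
\]
Then $\mu$ is discrete and locally finite (each cluster $\{c_n\}\cup(c_n+\{-1,1\}^d)$ has total mass $\varepsilon_n+2^d\le 1+2^d$, and a bounded set meets only finitely many clusters), so $(\mathbb{R}^d,\|\cdot\|_\infty,\mu)$ is a metric measure space. The clusters are well separated: if $\|x-c_n\|_\infty\le 1$ and $x'$ lies in cluster $m\ne n$, then $\|x-x'\|_\infty\ge 10-2>1$. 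Hence $B^{cl}(c_n,1)\cap\operatorname{supp}\mu=\{c_n\}\cup(c_n+\{-1,1\}^d)$ and $\mu(B^{cl}(c_n,1))=\varepsilon_n+2^d$; and, since for $w\ne v$ one has $\|(c_n+v)-(c_n+w)\|_\infty=\|v-w\|_\infty=2>1$, also $B^{cl}(c_n+v,1)\cap\operatorname{supp}\mu=\{c_n+v,\,c_n\}$ and $\mu(B^{cl}(c_n+v,1))=1+\varepsilon_n$.

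It follows that
\[
a_1(c_n)=\int_{\mathbb{R}^d}\frac{\mathbf{1}_{B^{cl}(c_n,1)}(x)}{\mu(B^{cl}(x,1))}\,d\mu(x)
=\frac{\varepsilon_n}{\varepsilon_n+2^d}+\sum_{v\in\{-1,1\}^d}\frac{1}{1+\varepsilon_n}
=\frac{\varepsilon_n}{\varepsilon_n+2^d}+\frac{2^d}{1+\varepsilon_n},
\]
which tends to $2^d$ as $n\to\infty$. Since $\mu\{c_n\}=\varepsilon_n>0$, for each $t<2^d$ the set $\{a_1>t\}$ contains $c_n$ for all large $n$, hence has positive $\mu$-measure; therefore $\|a_1\|_\infty\ge 2^d$. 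Combined with the bound of the first paragraph, this yields $\|A^{cl}_{1,\mu}\|_{L^1(\mu)\to L^1(\mu)}=\|a_1\|_\infty=2^d=M^{cl}=D^{cl}$.

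The point one has to get right — and the reason infinitely many vanishing clusters are used rather than a single configuration — is that $a_1(y)=2^d$ can never hold at a point of $\operatorname{supp}\mu$. Equality would force, after a translation placing $y$ at the origin, that $B^{cl}(y,1)$ contain a maximal strict $1$-net of $2^d$ atoms $v$, each with $\mu(B^{cl}(v,1))=\mu\{v\}$ and with no further mass inside $B^{cl}(y,1)$; since $y\in B^{cl}(v,1)$ for each such $v$, this forces $\mu\{y\}=0$, yet the requirement that no atoms accumulate at $y$ (they would add mass to $B^{cl}(y,1)$) together with $\bigcap_{v}B^{cl}(v,1)=\{y\}$ in $(\mathbb{R}^d,\|\cdot\|_\infty)$ leaves no positive-measure set on which $a_1=2^d$. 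So the value $2^d$ must be approached but never attained, which is exactly what letting $\varepsilon_n\downarrow 0$ along infinitely many clusters delivers. (If one insists that $\operatorname{supp}\mu$ itself, rather than the ambient $\mathbb{R}^d$, be a geometrically doubling space with doubling constant precisely $2^d$, one refines each cluster into a self-similar dyadic family — adjoining the half-vertices $c_n+v/2$, the quarter-vertices $c_n+v/4$, and so on — so that every closed ball of $\operatorname{supp}\mu$ is again covered by $2^d$ closed balls of half its radius, as in \cite{Al4}; the $L^1$ computation is unaffected in the limit.)
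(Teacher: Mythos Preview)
Your proof is correct and follows essentially the same construction as the paper: infinitely many well-separated clusters, each consisting of a center of vanishing mass surrounded by $2^d$ unit atoms forming a strict $1$-net, so that the operator norm approaches $2^d$ along the clusters. The only cosmetic difference is that you extract the lower bound via the conjugate function $a_1$ and Theorem \ref{equiv}, whereas the paper applies $A^{cl}_1$ to the test functions $f_n=n\,\mathbf{1}_{\{3ne_1\}}$ directly; these are dual computations yielding the same inequality.
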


\begin{proof}  That $2^d =M^{cl} = D^{cl}$ is clear: consider $B^{cl} (0,1) = [-1, 1]^d$; its vertices form a strict
1-net in $B^{cl} (0,1)$, so $2^d \le M^{cl}$. Also, the translates of $[0,1]^d$ that  are contained in 
$ [-1, 1]^d$ and share a vertex with  $[-1, 1]^d$ form a cover of  $[-1, 1]^d$, so $2^d \ge  D^{cl}$.

Let  $\{x_1, \dots, x_{2^d}\}$ be an enumeration of the vertices of   $[-3/4, 3 /4]^d$. 
For $n \ge 1$, set
$\mu_n := n^{-1} \delta_{3 n e_1} + \sum_{i= 1}^{2^d} \delta_{x_i + 3 n e_1}$,
and let $\mu := \sum_{n=1}^\infty \mu_n$. If  $f_n := n \mathbf{1}_{3 n e_1}$, then $\| f_n \|_{L^1(\mu)}  = 1$
and $\|A^{cl}_{1,  \mu} f _n \|_{L^1(\mu)} >  2^d n /(n + 1)$.
\end{proof}

\vskip .2 cm

Putting together Theorems \ref{L1},  \ref{equiv}, and \ref{geomdoubling},  we obtain the following

\begin{corollary}  \label{L1conv}
Suppose  $(X, d, \mu)$ is either a geometrically doubling 
metric measure space,  or $\mu$ satisfies a local comparability condition. 
Then for  every $f\in L^p(\mu)$,  $1 \le p < \infty$,
we have $\lim_{r\to 0}  A_{r} f  =  f$ in $L^p$.
 \end{corollary}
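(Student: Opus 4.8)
The plan is to verify, in each of the two cases, the single hypothesis of Theorem \ref{L1}: that the averaging operators $A_r$ are bounded on $L^p(\mu)$ uniformly in $r > 0$. Once this is in place, the conclusion $\lim_{r\to 0} A_r f = f$ in $L^p(\mu)$ is immediate from that theorem. Recall that $(X, d, \mu)$ is assumed throughout to be a metric measure space, so $\mu$ is $\tau$-additive and the averages $A_r f$ are well defined a.e., which is all the machinery Theorem \ref{L1} needs.

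First I would treat the geometrically doubling case. Let $D$ be the doubling constant and let $M \le D$ be the maximal cardinality of an $r$-net inside a ball of radius $r$, as in Theorem \ref{geomdoubling}. That theorem gives $\|A_r\|_{L^1(\mu)\to L^1(\mu)} \le M$ for every $r > 0$, and, combined with the trivial identity $\|A_r\|_{L^\infty(\mu)\to L^\infty(\mu)} = 1$, it gives $\|A_r\|_{L^p(\mu)\to L^p(\mu)} \le M^{1/p} \le M$ for every $1 < p < \infty$ by interpolation. Hence $\sup_{r>0}\|A_r\|_{L^p(\mu)\to L^p(\mu)} \le M$ for all $1 \le p < \infty$, with a bound independent of $r$ (and of $\mu$). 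The same applies separately to $A_r^{o}$ and $A_r^{cl}$, with the corresponding constants $M^{o}$ and $M^{cl}$, so both families of averages converge.

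Next I would treat the local comparability case. If $\mu$ satisfies a $C$ local comparability condition, then, as recorded in the paragraph following Definition \ref{loccomp}, the conjugate function satisfies $a_s(y) \le C$ for a.e.\ $y$, so $a_s \in L^\infty(\mu)$ with $\|a_s\|_\infty \le C$; Theorem \ref{equiv} then yields $\|A_s\|_{L^1(\mu)\to L^1(\mu)} = \|a_s\|_\infty \le C$ for every $s > 0$. Interpolating once more against the trivial estimate $\|A_s\|_{L^\infty(\mu)\to L^\infty(\mu)} = 1$ gives $\|A_s\|_{L^p(\mu)\to L^p(\mu)} \le C$ for all $1 \le p < \infty$, again uniformly in $s$.

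In both cases the hypothesis of Theorem \ref{L1} is met, and the corollary follows. There is no genuine obstacle here: the statement is a direct assembly of Theorems \ref{L1}, \ref{equiv} and \ref{geomdoubling} together with the local comparability observation. The only points deserving a moment's attention are that the operator-norm bounds produced in each case are \emph{independent of $r$} (equivalently $s$) — which is exactly what Theorems \ref{geomdoubling} and \ref{equiv}, and the local comparability estimate, deliver — and, for the range $1 < p < \infty$, that the interpolation step (or the Jensen inequality argument) be invoked against the two endpoint estimates $p = 1$ and $p = \infty$, both of which are already available.
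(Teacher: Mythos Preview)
Your proposal is correct and follows exactly the route the paper takes: the paper simply states that the corollary is obtained by ``putting together Theorems \ref{L1}, \ref{equiv}, and \ref{geomdoubling},'' and you have spelled out precisely that assembly. There is nothing to add.
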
 
 
 \begin{corollary}  
Suppose  $(X, d, \mu)$ is either a geometrically doubling 
metric measure space,  or $\mu$ satisfies a local comparability condition. 
 Then
for all $f\in L_{loc}^1(\mu)$,
$Mf (x) \ge |f|(x)$ almost everywhere.
 \end{corollary}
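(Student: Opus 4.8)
The plan is to deduce the pointwise bound $Mf(x)\ge |f|(x)$ a.e.\ as a corollary of the $L^p$-convergence established in Corollary \ref{L1conv}, using the elementary fact that each average $A_rf$ is dominated in absolute value by $M|f|$. First I would fix $f\in L^1_{loc}(\mu)$; since the statement is local and $Mf=M|f|$, I may as well assume $f\ge 0$ and, after restricting to a bounded ball $B(z,R)$, that $f\in L^1(\mu)$ (the maximal function at a point $x$ only depends on the values of $f$ on balls $B(x,r)$, so testing on larger and larger bounded balls and letting $R\to\infty$ recovers the general locally integrable case).

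Next I would invoke Corollary \ref{L1conv}: for such $f\in L^p(\mu)$ (take $p=1$), $A_{r}f\to f$ in $L^1(\mu)$ as $r\to 0$. A sequence converging in $L^1$ has a subsequence converging $\mu$-a.e., so there is $r_n\downarrow 0$ with $A_{r_n}f(x)\to f(x)$ for $\mu$-a.e.\ $x$. On the other hand, by the very definition of the uncentered (here centered) maximal operator, $Mf(x)=\sup_{r>0}A_r|f|(x)\ge A_{r_n}|f|(x)=A_{r_n}f(x)$ for every $n$ (using $f\ge 0$), at every $x\in\operatorname{supp}\mu$ where the averages are defined. Passing to the limit along the subsequence at an a.e.\ point $x$ gives $Mf(x)\ge\lim_n A_{r_n}f(x)=f(x)=|f|(x)$, which is the claim. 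Removing the earlier reductions: for a bounded ball $B(z,R)$ apply the above to $f\mathbf 1_{B(z,R)}$ to get $M(f\mathbf 1_{B(z,R)})\ge |f|$ a.e.\ on, say, $B(z,R-1)$, where for small $r$ the ball $B(x,r)$ lies inside $B(z,R)$ so the truncation does not affect $A_rf(x)$; then take a countable exhaustion $R\to\infty$.

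The only genuine subtlety — and the step I expect to be the main obstacle to state cleanly rather than to prove — is the interplay between a.e.\ defined objects: $A_rf$ is only defined on $\operatorname{supp}\mu$ and only $\mu$-a.e.\ in $r$, and $Mf$ is a supremum over all $r>0$. One must make sure the exceptional null sets (from truncation, from the subsequence extraction, from the a.e.\ domain of each $A_{r_n}$) are countably many and hence harmless, and that the supremum defining $Mf$ is genuinely over a set of radii containing the chosen $r_n$ at the given point $x$. Since $X=\operatorname{supp}\mu$ after discarding a null set (as in the proof of Theorem \ref{geomdoubling}) and the $A_{r_n}$ are each defined $\mu$-a.e., the union of all these null sets over the countable family $\{r_n\}$ and the countable exhaustion $\{B(z,R)\}$ is still null, so the argument goes through. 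No new estimate is needed; the whole corollary is a soft consequence of $L^p$-convergence plus $Mf\ge A_rf$.
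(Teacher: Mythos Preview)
Your argument is correct and follows the same route as the paper: invoke Corollary \ref{L1conv} to get $L^1$-convergence of $A_r f$ to $f$, extract an a.e.\ convergent subsequence, and combine with the trivial pointwise bound $Mf(x)\ge A_r|f|(x)$. Your additional care in reducing from $L^1_{loc}$ to $L^1$ via truncation to bounded balls is a detail the paper leaves implicit, but it is the natural way to make the argument precise.
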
 
 
 \begin{proof} This follows from Corollary \ref{L1conv}, since from $L^1$ convergent sequences
one can always extract subsequences converging a.e., and for every $r > 0$, 
$Mf (x) \ge A_r |f|(x)$.
\end{proof}
 
  The part of the preceding corollary dealing with geometrically doubling metric spaces had been originally obtained (cf. \cite[Corollary 2.10]{Al3})
 by using a result of Hyt\"onen (cf. \cite[Lemma 3.3]{Hy}) 
  on the existence of arbitrarily small doubling balls,
  for general measures in geometrically doubling metric measure spaces.

In view of the  exponential increase of the  bounds, for the
 standard gaussian measure  $\gamma^d$  on $(\mathbb{R}^d, \|\cdot\|_2)$, one might suspect that in the  infinite dimensional case
the uniform boundedness of the averaging operators
can fail. It follows from a  result of D. Preiss  
that this is indeed the case.

\begin{corollary}\label{preiss} There is a gaussian measure $\gamma$ on an 
infinite
dimensional  separable Hilbert space $H$,
for which $\sup_{r > 0}\|A_r \|_{L^p(\gamma)\to L^p(\gamma)} = \infty
$ whenever $1 \le p < \infty$.
\end{corollary}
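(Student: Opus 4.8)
The plan is to invoke a theorem of Preiss which asserts that for a suitable Gaussian measure $\gamma$ on an infinite dimensional separable Hilbert space $H$, the Lebesgue differentiation theorem fails in a strong way: there exists $f \in L^1(\gamma)$ (indeed one may take $f$ bounded) for which $A_r f$ does not converge to $f$ in any reasonable sense as $r \to 0$ — in particular, $\lim_{r\to 0} A_r f = f$ fails to hold $\gamma$-a.e. and, more to the point, fails in $L^1(\gamma)$. Granting such an $f$, the corollary follows immediately by contraposition of Theorem \ref{L1} (the $L^p$-Lebesgue differentiation theorem): if we had $\sup_{r>0}\|A_r\|_{L^1(\gamma)\to L^1(\gamma)} \le C < \infty$, then Theorem \ref{L1} with $p=1$ would force $\lim_{r\to 0} A_r g = g$ in $L^1(\gamma)$ for \emph{every} $g \in L^1(\gamma)$, contradicting the existence of the bad function $f$. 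Hence the $L^1$ bounds must be unbounded. For $1 < p < \infty$, I would note that $L^p(\gamma) \subset L^1(\gamma)$ since $\gamma$ is a probability measure, that bounded functions lie in every $L^p(\gamma)$, and that the same Preiss-type example (or a minor variant of it, still bounded) obstructs $L^p$-convergence; then Theorem \ref{L1} applied with exponent $p$ again yields $\sup_{r>0}\|A_r\|_{L^p(\gamma)\to L^p(\gamma)} = \infty$. Alternatively, one can observe that uniform $L^p$ boundedness for some $p \in (1,\infty)$ together with the trivial $L^\infty$ bound would, by interpolation, give uniform $L^1$ boundedness on the $L^1$-$L^\infty$ interpolation scale restricted to bounded functions, which is enough to run the density argument of Theorem \ref{L1}; but invoking the failure of $L^p$-differentiation directly is cleaner.

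The key steps, in order: (i) cite the precise statement from Preiss's work guaranteeing a Gaussian measure $\gamma$ on an infinite dimensional separable Hilbert space together with a bounded Borel function $f$ for which the averages $A_r f$ fail to converge to $f$ in $L^1(\gamma)$ as $r \to 0$ — one must check that Preiss's construction is phrased for, or can be transported to, the metric measure space setting of this paper (it is a Radon measure on a separable metric space, so $\tau$-additivity and local finiteness hold, and the averaging operators of Definition \ref{maxfun} are well-defined a.e.); (ii) observe that $f \in L^1(\gamma)$ and, being bounded, $f \in L^p(\gamma)$ for all $p < \infty$; (iii) apply the contrapositive of Theorem \ref{L1} for the relevant $p$ to conclude $\sup_{r>0}\|A_r\|_{L^p(\gamma)\to L^p(\gamma)} = \infty$.

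The main obstacle I anticipate is purely bibliographic and definitional rather than conceptual: pinning down exactly which of Preiss's results is being invoked and verifying that its hypotheses match the present framework — in particular, whether the cited theorem produces failure of convergence \emph{in $L^1$} or merely failure \emph{almost everywhere}, and whether the balls used there are open or closed (Definition \ref{maxfun} allows either, and one would want the statement to be robust to this choice). If only a.e.\ failure is available off the shelf, I would need the extra observation that, for a bounded $f$, a.e.\ failure of $\lim_{r\to 0}A_r f = f$ upgrades to $L^1$ failure: indeed $L^1$ convergence would force a.e.\ convergence along a subsequence $r_n \to 0$, and since the $A_{r_n}f$ are uniformly bounded by $\|f\|_\infty$, dominated convergence would then identify the a.e.\ limit with $f$, contradicting the hypothesis — so this gap, if present, is easily bridged. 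Everything else is a one-line application of Theorem \ref{L1}.
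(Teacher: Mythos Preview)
Your proposal is correct and follows essentially the same route as the paper: the paper cites Preiss's result in the precise form that there is a Gaussian probability $\gamma$ on a separable Hilbert space and a Borel set $C$ with $0<\gamma C<1$ such that $\gamma$-a.e.\ the density $\gamma(C\cap B(x,r))/\gamma(B(x,r))\to 0$, then applies the contrapositive of Theorem~\ref{L1} at exponent $p$ and extracts an a.e.\ convergent subsequence to derive a contradiction --- exactly the ``bridge'' you anticipated. One small caveat: your alternative interpolation remark is incorrect as written (Riesz--Thorin between $L^p$ and $L^\infty$ only yields $L^q$ for $q\ge p$, not $L^1$), but since you do not rely on it, this does not affect the argument.
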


\begin{proof} By \cite{Pr}, there exists a gaussian probability $\gamma$ on a separable Hilbert space $H$
and a Borel subset $C$ with $0 < \gamma C < 1$, such that $\gamma$-a.e. x,   
$$
\lim_{r\to 0} \frac{\gamma (C \cap B (x, r))}{\gamma
(B (x, r))} = 0.
$$
Fix $p\in [1, \infty)$. By Theorem \ref{avimplyL1}, if $\sup_{r > 0}\|A_r \|_{L^p(\mu)\to L^p(\mu)} < \infty$, then
$
\lim_{r\to 0} \|A_r \mathbf{1}_C - \mathbf{1}_C\|_p  = 0,
$
 so it is possible to extract a 
 sequence $r_n \to 0$ such that $\gamma$ a.e. $x$,
$$
\lim_{n} \frac{\gamma (C \cap B (x, r_n))}{\gamma
(B (x, r_n))} = \mathbf{1}_C (x),
$$
which is a contradiction.
\end{proof}

\end{document}